\newtheorem{theorem}{Theorem}
\newtheorem{lemma}{Lemma}
\newtheorem{proposition}{Proposition}
\newcommand{\Aa}{\mathcal{A}}
\newcommand{\Bb}{\mathcal{B}}
\newcommand{\Cc}{\mathcal{C}}
\newcommand{\Gg}{\mathcal{G}}
\newcommand{\Rr}{\mathcal{R}}
\newcommand{\Vv}{\mathcal{V}}
\newcommand{\EE}{\mathbb{E}}
\newcommand{\GG}{\mathbb{G}}
\newcommand{\Abf}{\mathbf{A}}
\newcommand{\Bbf}{\mathbf{B}}
\newcommand{\Cbf}{\mathbf{C}}
\newcommand{\To}{\rightarrow}
\newcommand{\coker}{\mathrm{coker}\,}
\newcommand{\im}{\mathrm{im}\,}
\title{On the representations of 2-groups in Baez-Crans 2-vector spaces}
\author{Benjam\'in A. Heredia}
\address{Departamento de Matem\'atica e Centro de Matem\'atica e Aplica\c{c}oes (CMA), FCT, Universidade Nova de Lisboa}
\email{b.heredia@fct.unl.pt}
\thanks{The first author is supported by the Funda\c{c}\~ao para a Ci\^encia e a Tecnologia (Portuguese Foundation for Science and Technology) through the project UID/MAT/00297/2013 (Centro de Matem\'atica e Aplica\c{c}\~oes). Also supported by the Generalitat de Catalunya (Project: 2014 SGR 634) and from by DGI of Spain, Project MTM2011-22554. The first author would like to thank professor Jo\~ao Faria Martins for introducing him into this subject, and also the kind hospitality during his stay at the Departament de Matem\`atiques, Universitat Polit\`ecnica de Catalunya.}
\author{Josep Elgueta}
\address{Departament de Matem\`atiques Universitat Polit\`ecnica de Catalunya}
\email{josep.elgueta@upc.edu}
\thanks{The second author acknowledges the financial support from the Generalitat de Catalunya (Project: 2014 SGR 634), the Ministerio de Econom\'ia y Competitividad of Spain (Project: MTM2015-69135-9) and from the Centro de Matem\'atica e Aplica\c{c}oes of the Universidade Nova de Lisboa (Project: UID/MAT/00297/2013), Portugal, and the kind hospitality during his stay at the Centro de Matem\'atica e Aplica\c{c}oes.}
\subjclass[2010]{18D05, 18D10, 20L05}
\keywords{2-groups (categorical groups); 2-vector spaces; Representations; 2-categories}
\begin{document}

\maketitle

\begin{abstract}
    We prove that the theory of representations of a finite 2-group $\GG$ in Baez-Crans 2-vector spaces over a field $k$ of characteristic zero essentially reduces to the theory of $k$-linear representations of the group of isomorphism classes of objects of $\GG$, the remaining homotopy invariants of $\GG$ playing no role. It is also argued that a similar result is expected to hold for {\em topological} representations of compact topological 2-groups in suitable {\em topological} Baez-Crans 2-vector spaces.
\end{abstract}

\section{Introduction}

In the last two decades there have been a few attempts to generalize the representation theory of groups to the higher dimensional setting of categories. See Baez {\em et al} \cite{BBFW2012}, Bartlett \cite{bB2011}, Crane and Yetter \cite{CY2005}, Elgueta \cite{jE2007, jE2011}, Ganter and Kapranov \cite{GK2008}, and Ganter \cite{nG2015}. 
By analogy with the classical setting, it is natural to try to represent 2-groups in a suitable categorification of the category $\Vv ect_k$ of (finite dimensional) vector spaces over a ground field $k$, often called the 2-category of {\em 2-vector spaces} over $k$. 

One of the first proposals of definition of 2-vector space is that of Baez and Crans \cite{BC2004}. According to these authors, a 2-vector space over $k$ is an internal category in $\Vv ect_k$, and they proved that this is the same thing as a 2-term chain complex of vector spaces over $k$, i.e. a $k$-linear map $d:V_1\to V_0$. To our knowledge, the unique existing work on the representation theory of 2-groups in these 2-vector spaces is the very preliminary presentation by Forrester-Barker \cite{FB2003}.

The purpose of this short paper is to show that the representation theory of a finite 2-group $\GG$ in Baez-Crans 2-vector spaces over a field of characteristic zero is in some sense trivial. More precisely, it will be shown that the homotopy category of the corresponding 2-category of representations of $\GG$ is simply equivalent to the product category $\Rr ep_k(\pi_0(\GG))\times\Rr ep_k(\pi_0(\GG))$, where $\pi_0(\GG)$ is the group of isomorphism classes of objects of $\GG$, and $\Rr ep_k(\pi_0(\GG))$ is the category of $k$-linear representations of $\pi_0(\GG)$. In particular, the remaining homotopy invariants of $\GG$ classifying it up to equivalence, namely, the abelian group $\pi_1(\GG)$ of automorphisms of the unit object of $\GG$ and the cohomology class in $H^3(\pi_0(\GG),\pi_1(\GG))$, play no role. We also argue that a similar result will be true for compact topological 2-groups and their topological representations in a suitable 2-category of {\em topological} Baez-Crans 2-vector spaces. The result is basically a consequence of the fact that the underlying category of the 2-category of Baez-Crans 2-vector spaces over $k$ is essentially $\Vv ect_k\times\Vv ect_k$. 

To avoid writing a too long paper, we will assume the reader is familiar with the notions of 2-group and 2-category, and with the corresponding notions of morphism, which are understood in the weak sense, including the notions of pseudonatural transformation and modification. We refer the reader to Leinster \cite{L1998} or Borceux \cite{fB1994} for an introduction to 2-categories, and to Baez and Lauda \cite{BL2004} for an introduction to 2-groups.

{\bf Notation.} We will use letters like $\Aa,\Bb,\Cc,...$ to denote categories, and $\Abf,\Bbf,\Cbf,...$ to denote 2-categories. Vertical composition of 2-cells will be denoted by juxtaposition, and composition of 1-cells and horizontal composition of 2-cells by $\circ$.

\section{The 2-category of Baez-Crans 2-vector spaces}

Let us start by describing the 2-category $\mathbf{Ch}_2(\Aa)$ of 2-term chain complexes (i.e. chain complexes concentrated in degrees 1 and 0) in any abelian category $\Aa$. We will be mainly concerned with the case $\Aa=\Vv ect_k$, the category of finite dimensional vector spaces over a field $k$. $\mathbf{Ch}_2(k)$ is short notation for $\mathbf{Ch}_2(\Vv ect_k)$. We will refer to $\mathbf{Ch}_2(k)$ as the 2-category of Baez-Crans 2-vector spaces over $k$.

\subsection{}
An object of \(\mathbf{Ch}_2(\mathcal{A})\) is a morphism of \(\mathcal{A}\), that is \(d_V=d:V_1\to V_0\), denoted by \(V_\bullet\). The morphism \(d\) is called the differential.

A 1-cell \(f_\bullet= (f_1,f_0):V_\bullet \to W_\bullet\) is a commutative square
\[
\xymatrix{
V_1 \ar[r]^{d_V}
  \ar[d]_{f_1}
&
V_0\ar[d]^{f_0}
\\
W_1\ar[r]^{d_W}
&
W_0,
}
\]
and a 2-cell \(\sigma:f_\bullet \Rightarrow g_\bullet:V_\bullet \to W_\bullet\) is a morphism \(\sigma:V_0\to W_1\) in \(\mathcal{A}\) such that
\[
\begin{array}{l}
d_W\circ \sigma = g_0 - f_0 \\
\sigma \circ d_V = g_1 - f_1.
\end{array}
\]
The composition of 1-cells is given by the composition in \(\mathcal{A}\), that is, given \((f_1,f_0):U_\bullet \to V_\bullet\) and \((g_1,g_0):V_\bullet \to W_\bullet\) the composite is
\[
(g_1,g_0)\circ (f_1,f_0)=(g_1\circ f_1,g_0\circ f_0):U_\bullet \to W_\bullet,
\]
and the identity morphisms are given by \(1_{V_\bullet}=(1_{V_1},1_{V_0})\).

The vertical composite of \(\sigma:f_\bullet \Rightarrow g_\bullet\) and \(\tau:g_\bullet \Rightarrow h_\bullet\) is given by the addition in $\Aa$, that is
\[
\tau\sigma=\tau + \sigma:f_\bullet\Rightarrow h_\bullet,
\]
while horizontal composite of \(\sigma:f_\bullet \Rightarrow g_\bullet:U_\bullet \to V_\bullet\) and \(\sigma':f'_\bullet \Rightarrow g'_\bullet:V_\bullet \to W_\bullet\) is given by the map
\begin{eqnarray*}
\sigma'\circ\sigma &=f'_1\circ\sigma + \sigma'\circ g_0 \\ &= g'_1\circ\sigma + \sigma'\circ f_0.
\end{eqnarray*}
Finally, identity 2-cells are given by \(1_{f_\bullet}=0:V_0\to W_1\) for any 1-cell \(f_\bullet:V_\bullet \to W_\bullet\). In particular, whiskerings are given by
\[
\sigma\circ 1_{f_\bullet}=\sigma\circ f_0,
\quad
1_{f'_\bullet}\circ\sigma=f'_1\circ\sigma.
\]
It is straightforward to check that \(\mathbf{Ch}_2(\mathcal{A})\) is a strict 2-category. In fact, it is a category enriched in groupoids. Each 2-cell $\tau$ is invertible with inverse $-\tau$.

\subsection{}
The next two results will be needed later. Both hold in an arbitrary abelian category $\Aa$.

\begin{lemma}\label{lema_zero_objects}
  A 2-term chain complex \(d:V_1\to V_0\) is equivalent in $\mathbf{Ch}_2(\Aa)$ to the zero complex $0_\bullet=0\to 0$ if and only if the differential \(d\) is an isomorphism in $\Aa$.
\end{lemma}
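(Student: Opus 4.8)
The plan is to unwind the definition of an equivalence of objects in the 2-category $\mathbf{Ch}_2(\Aa)$ and to exploit the fact that the zero complex $0_\bullet = 0\to 0$ has zero objects in both degrees, so that almost all of the data of such an equivalence is forced.

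First I would note that the only 1-cell $V_\bullet\to 0_\bullet$ is $(0,0)$, and likewise the only 1-cell $0_\bullet\to V_\bullet$ is $(0,0)$, since in an abelian category there is a unique morphism into the zero object in each degree. Consequently the composite $0_\bullet\to 0_\bullet$ equals $(1_0,1_0)=1_{0_\bullet}$ (so the corresponding 2-cell can be taken to be the identity $0$), while the composite $V_\bullet\to V_\bullet$ is the endomorphism $(0,0)$. Hence $V_\bullet$ is equivalent to $0_\bullet$ if and only if there exists an invertible 2-cell $\sigma\colon (0,0)\Rightarrow 1_{V_\bullet}$; and since $\mathbf{Ch}_2(\Aa)$ is enriched in groupoids, any such $\sigma$ is automatically invertible, so it is enough to check the existence of a 2-cell $\sigma$.

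Next I would spell out what such a 2-cell is: by definition $\sigma\colon (0,0)\Rightarrow (1_{V_1},1_{V_0})\colon V_\bullet\to V_\bullet$ is a morphism $\sigma\colon V_0\to V_1$ of $\Aa$ satisfying $d_V\circ\sigma = 1_{V_0}-0 = 1_{V_0}$ and $\sigma\circ d_V = 1_{V_1}-0 = 1_{V_1}$. Thus such a $\sigma$ exists precisely when $d_V$ admits a two-sided inverse, i.e. when $d_V$ is an isomorphism, in which case necessarily $\sigma = d_V^{-1}$. This settles both implications at once: if $d$ is an isomorphism, take $\sigma = d^{-1}$ to obtain the equivalence $V_\bullet\simeq 0_\bullet$; conversely, from an equivalence $V_\bullet\simeq 0_\bullet$ one extracts a 2-cell $\sigma$ and hence a two-sided inverse of $d$.

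I do not anticipate a genuine obstacle; the argument is essentially bookkeeping. The only point requiring a little care is the reduction in the second paragraph — that an equivalence to $0_\bullet$ really does collapse to the single 2-cell $\sigma$ with no further compatibility constraints — which holds because the relevant hom-categories and composites involving $0_\bullet$ are as trivial as possible and because, the 2-category being enriched in groupoids, no separate invertibility check is needed.
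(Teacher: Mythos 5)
Your proof is correct and is exactly the argument the paper intends: an equivalence with $0_\bullet$ collapses to the existence of a 2-cell between the zero endomorphism and $1_{V_\bullet}$, which by the definition of 2-cells in $\mathbf{Ch}_2(\Aa)$ is precisely a two-sided inverse of $d$. You have simply spelled out the bookkeeping that the paper's one-line proof leaves implicit.
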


    \begin{proof}
      It readily follows from the definitions that $V_\bullet\simeq 0_\bullet$ if and only if there exists a 2-cell $1_{V_\bullet}\Rightarrow 0_{V_\bullet}$, and this happens if and only if $d$ is an isomorphism.
    \end{proof}
\begin{lemma}\label{lema_d_oplus_1}
  For any object $W$ of $\Aa$ and any object $V_\bullet$ of $\mathbf{Ch}_2(\Aa)$, the 2-term chain complexes $d:V_1\to V_0$ and $d\oplus 1_W:V_1\oplus W\to V_0\oplus W$ are equivalent in $\mathbf{Ch}_2(\Aa)$.
\end{lemma}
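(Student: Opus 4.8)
The plan is to deduce this from Lemma~\ref{lema_zero_objects} together with the (obvious) compatibility of direct sums with equivalences. First observe that $d\oplus 1_W$ is, by construction, the componentwise direct sum $V_\bullet\oplus W_\bullet$ in $\mathbf{Ch}_2(\Aa)$, where $W_\bullet=(1_W\colon W\to W)$. Since the differential $1_W$ is an isomorphism, Lemma~\ref{lema_zero_objects} gives $W_\bullet\simeq 0_\bullet$. So it suffices to know that for fixed $V_\bullet$ the assignment $X_\bullet\mapsto V_\bullet\oplus X_\bullet$ extends to a strict 2-functor $\mathbf{Ch}_2(\Aa)\to\mathbf{Ch}_2(\Aa)$, acting on 1-cells by $(f_1,f_0)\mapsto(1_{V_1}\oplus f_1,\,1_{V_0}\oplus f_0)$ and on 2-cells by $\sigma\mapsto 0\oplus\sigma$; the commuting square and the two 2-cell identities are clearly preserved, as are vertical, horizontal and whiskering composites, because all of this structure in $\mathbf{Ch}_2(\Aa)$ was described componentwise in the previous subsection. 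Any 2-functor sends equivalences to equivalences, so $W_\bullet\simeq 0_\bullet$ yields $V_\bullet\oplus W_\bullet\simeq V_\bullet\oplus 0_\bullet$, and $V_\bullet\oplus 0_\bullet$ is visibly isomorphic to $V_\bullet$; composing gives the claim.

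Alternatively, to keep the argument self-contained, I would exhibit the equivalence directly. Let $p_\bullet=(p_1,p_0)\colon V_\bullet\oplus W_\bullet\to V_\bullet$ and $\iota_\bullet=(\iota_1,\iota_0)\colon V_\bullet\to V_\bullet\oplus W_\bullet$ be the 1-cells whose components are the canonical projections and injections of the biproducts $V_i\oplus W$ in $\Aa$; the required squares commute because $d\oplus 1_W$ is ``diagonal''. Then $p_\bullet\circ\iota_\bullet=1_{V_\bullet}$ on the nose, while $\iota_\bullet\circ p_\bullet$ differs from $1_{V_\bullet\oplus W_\bullet}$ only on the $W$-summand: the morphism $\sigma\colon V_0\oplus W\to V_1\oplus W$ that is zero on $V_0$ and $-1_W$ on $W$ satisfies $(d\oplus 1_W)\circ\sigma=\iota_0 p_0-1_{V_0\oplus W}$ and $\sigma\circ(d\oplus 1_W)=\iota_1 p_1-1_{V_1\oplus W}$, hence defines a 2-cell $1_{V_\bullet\oplus W_\bullet}\Rightarrow\iota_\bullet\circ p_\bullet$. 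As every 2-cell in $\mathbf{Ch}_2(\Aa)$ is invertible, $p_\bullet$ and $\iota_\bullet$ are mutually inverse equivalences.

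There is no genuinely hard step. The single place where the hypothesis on $W$ enters is that $1_W$ is invertible, which is exactly what lets $\sigma$ restrict correctly to the $W$-summand (equivalently, what makes Lemma~\ref{lema_zero_objects} applicable to $W_\bullet$). If one takes the first route, the only real work is the mechanical check that $(-)\oplus W_\bullet$ (or, more symmetrically, $\oplus\colon\mathbf{Ch}_2(\Aa)\times\mathbf{Ch}_2(\Aa)\to\mathbf{Ch}_2(\Aa)$) is a 2-functor, which is immediate from the componentwise formulas for the composites and whiskerings.
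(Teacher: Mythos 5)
Your second, self-contained argument is exactly the paper's proof: the same injection/projection pair with $p_\bullet\circ\iota_\bullet=1_{V_\bullet}$ on the nose and the 2-cell supported on the $W$-summand (the paper writes it as $0\oplus 1_W\colon\iota_\bullet\circ\pi_\bullet\Rightarrow 1$, the negative of your $\sigma$, which is the same datum since every 2-cell is invertible via negation). Both routes are correct; the first one, via Lemma~\ref{lema_zero_objects} and 2-functoriality of $(-)\oplus W_\bullet$, is a valid repackaging but does more work than needed.
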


    \begin{proof}
      Let $\pi_i:V_i\oplus W\to V_i$, $\iota_i:V_i\to V_i\oplus W$ be the canonical projections and injections for $i=0,1$. Then the 1-cell $\iota_\bullet=(\iota_0,\iota_1):V_\bullet\to V_\bullet\oplus W$ is an equivalence with $\pi_\bullet=(\pi_0,\pi_1):V_\bullet\oplus W\To V_\bullet$ as a pseudoinverse. Indeed, $\pi_\bullet\circ\iota_\bullet=1_{V_\bullet}$ while $\iota_\bullet\circ\pi_\bullet\cong 1_{V_\bullet\oplus W}$ via the 2-isomorphism $0\oplus 1_W:V_0\oplus W\to V_1\oplus W$.
    \end{proof}

\subsection{}
Let $\mathbf{Ch}'_2(\Aa)$ be the full sub-2-category of $\mathbf{Ch}_2(\Aa)$ with objects the zero morphisms $0:V_1\to V_0$ in $\Aa$. The significance of $\mathbf{Ch}'_2(\Aa)$ comes from the fact that all objects in $\mathbf{Ch}_2(\Aa)$ are equivalent to an object in $\mathbf{Ch}'_2(\Aa)$ when $\Aa$ is such that each short exact sequence splits, for instance when $\Aa$ is $\Vv ect_k$. Such an $\Aa$ will be called a {\em split abelian} category. More precisely, we have the following result, already implicit in \cite[Proposition 305]{mD2008}.

\begin{proposition}\label{zero_morphisms}
  Let $\Aa$ be a split abelian category. Then $\mathbf{Ch}'_2(\Aa)$ is biequivalent to $\mathbf{Ch}_2(\Aa)$.
\end{proposition}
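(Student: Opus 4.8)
The plan is to show that the inclusion 2-functor $\iota\colon\mathbf{Ch}'_2(\Aa)\hookrightarrow\mathbf{Ch}_2(\Aa)$ is a biequivalence. Since $\mathbf{Ch}'_2(\Aa)$ is a \emph{full} sub-2-category, for any objects $V_\bullet,W_\bullet$ of $\mathbf{Ch}'_2(\Aa)$ the functor $\iota_{V_\bullet,W_\bullet}\colon\mathbf{Ch}'_2(\Aa)(V_\bullet,W_\bullet)\to\mathbf{Ch}_2(\Aa)(V_\bullet,W_\bullet)$ is the identity, hence an equivalence of categories; so $\iota$ is automatically locally an equivalence. By the usual characterization of biequivalences, it then remains only to verify that $\iota$ is essentially surjective up to equivalence, i.e. that every object of $\mathbf{Ch}_2(\Aa)$ is equivalent in $\mathbf{Ch}_2(\Aa)$ to one with zero differential.

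To that end, fix $V_\bullet=(d\colon V_1\to V_0)$. First I would exploit the splitness hypothesis: the short exact sequences $0\to\ker d\to V_1\to\im d\to 0$ and $0\to\im d\to V_0\to\coker d\to 0$ both split, yielding isomorphisms $\alpha_1\colon V_1\xrightarrow{\ \sim\ }\ker d\oplus\im d$ and $\alpha_0\colon V_0\xrightarrow{\ \sim\ }\im d\oplus\coker d$ in $\Aa$ under which $d$ becomes the morphism $\ker d\oplus\im d\to\im d\oplus\coker d$ which is the identity on the $\im d$-summand and zero on everything else. After a reordering of the summands of $V_0$, this means the pair $(\alpha_1,\alpha_0)$ is an invertible $1$-cell exhibiting an isomorphism in $\mathbf{Ch}_2(\Aa)$
\[
V_\bullet\;\cong\;\bigl(0_{\ker d\to\coker d}\bigr)\oplus\bigl(1_{\im d}\colon\im d\to\im d\bigr),
\]
where $0_{\ker d\to\coker d}$ is the zero morphism $\ker d\to\coker d$, regarded as an object of $\mathbf{Ch}'_2(\Aa)$.

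It remains to absorb the $1_{\im d}$-summand. I would apply Lemma~\ref{lema_d_oplus_1} with its ``$V_\bullet$'' taken to be $0_{\ker d\to\coker d}$ and its ``$W$'' taken to be $\im d$: it gives an equivalence in $\mathbf{Ch}_2(\Aa)$ between $0_{\ker d\to\coker d}\oplus 1_{\im d}$ and $0_{\ker d\to\coker d}$. Composing this with the isomorphism above yields $V_\bullet\simeq 0_{\ker d\to\coker d}$ in $\mathbf{Ch}_2(\Aa)$, with $0_{\ker d\to\coker d}\in\mathbf{Ch}'_2(\Aa)$. This establishes essential surjectivity up to equivalence, completing the argument.

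I expect the only point needing care to be the reduction in the first paragraph: one must pin down which of the equivalent definitions of ``biequivalence'' is in force, and observe that for the inclusion of a full sub-2-category the local fullness and faithfulness on $2$-cells, and essential surjectivity on $1$-cells up to $2$-isomorphism, are trivially satisfied, so that the entire content is the object-level statement. The rest is the direct use of the splitting of short exact sequences together with Lemma~\ref{lema_d_oplus_1}, the manipulation of direct summands being the only routine bookkeeping.
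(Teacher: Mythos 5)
Your proposal is correct and follows essentially the same route as the paper: reduce to biessential surjectivity of the full inclusion, use splitness of the two short exact sequences to identify $d$ with $0\oplus 1_{\im d}\colon \ker d\oplus\im d\to\coker d\oplus\im d$, and then absorb the $\im d$-summand via Lemma~\ref{lema_d_oplus_1}. The only difference is that you spell out the reduction to essential surjectivity, which the paper leaves implicit.
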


    \begin{proof}
      It is enough to see that each object of $\mathbf{Ch}_2(\Aa)$ is equivalent to a zero morphism in $\Aa$. In fact, an object $d:U_1\to U_0$ of $\mathbf{Ch}_2(\Aa)$ is equivalent to the zero morphism $\ker d\stackrel{0}{\to}\coker d$. Indeed, we have the short exact sequences
      \[
      \begin{array}{l}
      0\rightarrow \ker d\rightarrow U_1\rightarrow \coker(\ker d)\rightarrow 0
      \\
      0\rightarrow \ker(\coker d)\rightarrow U_0\rightarrow \coker d\rightarrow 0,
      \end{array}
      \]
      and $\coker(\ker d)\cong \ker(\coker d)$. As usual we identify both objects and denote them by $\im d$. It follows that we have a commutative square of the form
      \[
      \xymatrix{
      U_1\ar[r]^d\ar[d]_{\cong}
      &
      U_0\ar[d]^{\cong}
      \\
      \ker d\oplus \im d\ar[r]_{0\oplus 1}
      &
      \coker d\oplus \im d.
      }
     \]
     In particular, the top and the bottom morphisms are equivalent as objects in $\mathbf{Ch}_2(\Aa)$ (in fact, isomorphic). The result now follows from Lemma~\ref{lema_d_oplus_1}.
    \end{proof}

\subsection{}\label{Eq(V)}
It easily follows from the above description of $\mathbf{Ch}_2(\Aa)$ that the 2-group of self-equivalences of an object $V_\bullet$ in $\mathbf{Ch}'_2(\Aa)$ is the skeletal and strict 2-group that has the elements of $\mathrm{Aut}_\Aa(V_1)\times\mathrm{Aut}_\Aa(V_0)$ as objects, and the elements of $\Aa(V_0,V_1)\times\mathrm{Aut}_\Aa(V_1)\times\mathrm{Aut}_\Aa(V_0)$ as morphisms, with $(\sigma,f_1,f_0):(f_1,f_0)\to(f_1,f_0)$. The composition of morphisms is given by the sum in $\Aa(V_0,V_1)$, and the tensor product is given on objects and morphisms by
    \begin{align*}
       (f'_1,f'_0)\otimes(f_1,f_0)&=(f'_1\circ f_1,f'_0\circ f_0)
       \\
       (\sigma',f'_1,f'_0)\otimes(\sigma,f_1,f_0)&=(\sigma'\circ f_0+f'_1\circ\sigma,f'_1\circ f_1,f'_0\circ f_0).
    \end{align*}


\section{Representations in Baez-Crans 2-vector spaces}
\label{sec:repr_2-cat}

From now on, we will assume that the ground field $k$ is of characteristic zero. The goal of this section is to prove that the representation theory of a finite 2-group $\GG$ in the 2-category of Baez-Crans 2-vector spaces is trivial in the sense made precise below. More generally, this is true for representations in $\mathbf{Ch}_2(\Aa)$ for any split $k$-linear abelian category $\Aa$. 

We start by describing the 2-category $\mathbf{Rep}_{\mathbf{Ch}_2(\Aa)}(\GG)$ of representations of $\GG$ in $\mathbf{Ch}_2(\Aa)$ for any $\Aa$. For later use, we do it for an arbitrary $\Aa$, and we next focus on the split case.

\subsection{Description of the generic 2-category of representations}
Let $\GG[1]$ be the one-object 2-groupoid with $\GG$ as 2-group of self-equivalences of the unique object. By definition,  $\mathbf{Rep}_{\mathbf{Ch}_2(\Aa)}(\GG)$ is the (strict) 2-category of pseudofunctors from $\GG[1]$ to $\mathbf{Ch}_2(\Aa)$, pseudonatural transformations between them, and modifications between these. When unpacked, this definition leads to the 2-category with the following 0-, 1- and 2-cells.

\subsubsection{}\label{objects}
An object in $\mathbf{Rep}_{\mathbf{Ch}_2(\Aa)}(\GG)$ is given by the following data:
\begin{itemize}
    \item[(O1)] a 2-term chain complex $d:V_1\to V_0$, also denoted by $V_\bullet$;
    \item[(O2)] for each object $a\in\Gg$ a pair $f^a_\bullet=(f^a_1,f^a_0)$ which makes the square in $\Aa$
        \[
        \xymatrix{
            V_1\ar[r]^d
               \ar[d]_{f^a_1}
            &
            V_0\ar[d]^{f^a_0}
            \\
            V_1\ar[r]_d 
            &
            V_0
        }
        \]
    commute;
    \item[(O3)] for each morphism $\phi:a\to a'$ in $\Gg$ a morphism $\tau_\phi:V_0\to V_1$ in $\Aa$ such that
\[
\begin{array}{l}
f^{a'}_0-f^a_0=d\circ\tau_\phi,
\\
f^{a'}_1-f^a_1=\tau_\phi\circ d;
\end{array}
\]
    \item[(O4)] for each pair of objects $a,b\in\Gg$ a morphism $\tau_{a,b}:V_0\to V_1$ in $\Aa$ such that
\[
\begin{array}{l}
f^{a\otimes b}_0-f^a_0\circ f^b_0=d\circ\tau_{a,b},
\\
f^{a\otimes b}_1-f^a_1\circ f^b_1=\tau_{a,b}\circ d.
\end{array}
\]
    \item[(O5)] a morphism $\tau_e:V_0\to V_1$ such that
\[
\begin{array}{l}
f^{e}_0-1_{V_0}=d\circ\tau_{e},
\\
f^{e}_1-1_{V_1}=\tau_{e}\circ d.
\end{array}
\]
\end{itemize}
Moreover, these data must satisfy the following axioms:
\begin{itemize}
    \item[(AO1)] $\tau_{\phi'\circ\phi}=\tau_{\phi'}+\tau_{\phi}$ for every composable 1-cells $a\stackrel{\phi}{\to}a'\stackrel{\phi'}{\to}a''$ in $\Gg$;
    \item[(AO2)] $\tau_{1_a}=0$ for each object $a\in\Gg$;
    \item[(AO3)] $\tau_{a,b}+f^b_1\circ\tau_\phi+\tau_\psi\circ f^{a'}_0=\tau_{\phi\otimes\psi}+\tau_{a',b'}$ for every 1-cells $\phi:a\to a'$ and $\psi:b\to b'$ in $\Gg$;
    \item[(AO4)] $\tau_{\alpha_{a,b,c}}+\tau_{a,b\otimes c}+f^a_1\circ \tau_{b,c}=\tau_{a\otimes b,c}+\tau_{a,b}\circ f^c_0$ for every objects $a,b,c\in\Gg$;
    \item[(AO5)] $\tau_{a,e}+f^a_1\circ\tau_e=\tau_{\rho_a}$ and $\tau_{e,a}+\tau_e\circ f^a_0=\tau_{\lambda_a}$ for each object $a\in\Gg$.
\end{itemize}
Axioms (AO1)-(AO2) correspond to the functoriality of the assignments $\phi\mapsto\tau_\phi$, axiom (AO3) to the naturality of $\tau_{a,b}$ in $a,b$ and (AO4)-(AO5) to the coherence conditions. We will denote such an object by $(V_\bullet,\{f^a\},\{\tau_\phi\},\{\tau_{a,b}\},\tau_e)$ or just $V_\bullet$ when the action of $\GG$ on $V_\bullet$ is implicitly understood.

\subsubsection{}\label{1cells}
Given objects $(U_\bullet,\{f^a_\bullet\},\{\tau_\phi\},\{\tau_{a,b}\},\tau_e)$ and $(V_\bullet,\{g^a_\bullet\},\{\sigma_\phi\},\{\sigma_{a,b}\},\sigma_e)$, a 1-cell or 1-{\em intertwiner} from the first to the second consists of the following data:
\begin{itemize}
    \item[(I1)] a pair $r_\bullet=(r_1,r_0)$ which makes the square in $\Aa$
    \[
    \xymatrix{
    U_1\ar[r]^{d_U}
        \ar[d]_{r_1}
    &
    U_0\ar[d]^{r_0}
    \\
    V_1\ar[r]_{d_V}
    &
    V_0}
    \]
    commute;
    \item[(I2)] for each $a\in\Gg$ a morphism $\mu_a:U_0\to V_1$ in $\Aa$ such that
    \[
    \begin{array}{l}
    g^{a}_1\circ r_1-r_1\circ f^a_1=\mu_a\circ d_U,
    \\
    g^{a}_0\circ r_0-r_0\circ f^a_0=d_V\circ\mu_a;
    \end{array}
    \]
\end{itemize}
Moreover, these data must satisfy the following axioms:
\begin{itemize}
    \item[(AI1)] $\mu_a+\sigma_\phi\circ r_0=r_1\circ\tau_\phi+\mu_{a'}$ for each morphism $\phi:a\to a'$ in $\Gg$;
    \item[(AI2)] $r_1\circ\tau_{a,b}+\mu_{a\otimes b}=\mu_a\circ f^b_0+g^a_1\circ\mu^b+\sigma_{a,b}\circ r_0$ for every objects $a,b\in\Gg$;
    \item[(AI3)] $\mu_e+r_1\circ\tau_e=\sigma_e\circ r_0$.
\end{itemize}
Axiom (AI1) corresponds to the naturality of $\mu_a$ in $a$, and axioms (AI2)-(AI3) to the coherence conditions. We will denote such a 1-cell by $(r_\bullet,\mu)$ or just $r_\bullet$.

\subsubsection{}\label{2-cells}
Finally, given 1-cells $(r_\bullet,\mu)$, $(s_\bullet,\nu)$ between two representations $U_\bullet$ and $V_\bullet$, a 2-cell or {\em 2-intertwiner} from the first 1-cell to the second consists of morphism $\omega:U_0\to V_1$ such that
    \[
    \begin{array}{l}
    s_1-r_1=\omega\circ d_U,
    \\
    s_0-r_0=d_V\circ\omega
    \end{array}
    \]
and satisfying the following naturality axiom:
\begin{itemize}
    \item[(A2I)] $g^a_1\circ\omega+\mu_a=\nu_a+\omega\circ f^a_0$ for each object $a\in\Gg$.
\end{itemize}

\subsection{Case of a split $k$-linear abelian category}
Without loss of generality, we assume from now on that $\GG$ is the strict skeletal 2-group
    \[
    \GG=\pi_1[1]\rtimes_z\pi_0[0]
    \]
for some group $\pi_0$, left $\pi_0$-module $\pi_1$, and normalized 3-cocycle $z:\pi_0^3\to\pi_1$. This means that $\GG$ has the elements of $\pi_0$ as objects, and the pairs $(a,g)\in\pi_1\times\pi_0$, with $(a,g):g\to g$, as morphisms. Moreover, composition is given by the sum in $\pi_1$, and the tensor product by the product in $\pi_0$ on objects and by
\[
(a,g)\otimes(a',g')=(a+g\rhd a',gg')
\]
on morphisms ($\rhd$ stands for the left action of $\pi_0$ on $\pi_1$). Finally, the associator is given by $\alpha_{g,g',g''}=(z(g,g',g''),gg'g'')$ and the left and right unit isomorphisms are trivial. By Sinh's theorem \cite{hxS1975}, any 2-group is of this type up to equivalence (see also Baez and Lauda \cite{BL2004}).

For example, the 2-group $\EE q(V_\bullet)$ of self-equivalences of an object $V_\bullet$ in $\mathbf{Ch}'_2(\Aa)$ (see \S~\ref{Eq(V)}) is of this type, with
    \begin{align*}
        \pi_0&=\mathrm{Aut}_\Aa(V_1)\times\mathrm{Aut}_\Aa(V_0),
        \\
        \pi_1&=\Aa(V_0,V_1),
    \end{align*}
the left action of $\pi_0$ on $\pi_1$ given by
    \[
    (f_1,f_0)\rhd\sigma=f_1\circ\sigma\circ f^{-1}_0,
    \]
and the 3-cocycle $z$ equal to zero.

\subsubsection{}
We are interested in the 2-category $\mathbf{Rep}_{\mathbf{Ch}_2(\Aa)}(\GG)$ when $\Aa$ is split. Now, for these abelian categories $\mathbf{Rep}_{\mathbf{Ch}_2(\Aa)}(\GG)$ is biequivalent to $\mathbf{Rep}_{\mathbf{Ch}'_2(\Aa)}(\GG)$. This is a consequence of Proposition~\ref{zero_morphisms} and the general fact that for any biequivalent 2-categories $\Cbf,\Cbf'$ the corresponding representation 2-categories $\mathbf{Rep}_\Cbf(\GG)$ and $\mathbf{Rep}_{\Cbf'}(\GG)$ are biequivalent. This allows us to restrict from now on to representations of $\GG$ in $\mathbf{Ch}_2'(\Aa)$.

\subsubsection{}\label{representation}
From the description in \S~\ref{objects} of a representation of $\GG$ in $\mathbf{Ch}_2(\Aa)$, it follows that a representation as self-equivalences of an object in $\mathbf{Ch}_2'(\Aa)$ amounts to the following data: 

\begin{enumerate}
    \item[(O1$'$)] two representations of $\pi_0$ in $\Aa$, denoted by $\rho_i:\pi_0\to\mathrm{Aut}_\Aa(V_i)$, $i=0,1$;
    \item[(O2$'$)] a morphism of (left) $\pi_0$-modules $\beta:\pi_1\to\Aa(V_0,V_1)_{\rho_1}^{\rho_0}$, where $\Aa(V_0,V_1)_{\rho_1}^{\rho_0}$ stands for the abelian group $\Aa(V_0,V_1)$ equipped with the $\pi_0$-action induced by the representations $\rho_0,\rho_1$, and
    \item[(O3$'$)] a normalized 2-cochain $c:\pi_0^2\to\Aa(V_0,V_1)_{\rho_1}^{\rho_0}$ such that $\partial c=\beta_\ast(z)$. 
\end{enumerate}
Such a description can also be obtained from the above description of $\EE q(V_\bullet)$, the fact that a representation of $\GG$ as self-equivalences of $V_\bullet$ is nothing but a 2-group homomorphism $\GG\to\EE q(V_\bullet)$, and the description of the homomorphisms between strict skeletal 2-groups. Moreover, it may be shown that by changing the 2-cochain $c$ by another one differing from $c$ by a coboundary gives a representation which is equivalent to the original one (see \cite[Theorem 2.7]{jE2007}). The representation so defined will be denoted by $(\rho_1,\rho_0,\beta,c)$.

\begin{proposition}\label{representations_up_to_equivalence}
  Let us assume that $\GG$ is finite (i.e. $\pi_0$ and $\pi_1$ are finite), and that $\Aa$ is $k$-linear, with $k$ a field of characteristic zero. Then for any representation $(\rho_1,\rho_0,\beta,c)$ we have:
  \begin{itemize}
      \item[(i)] $\beta=0$, and
      \item[(ii)] $(\rho_1,\rho_0,0,c)$ is equivalent to $(\rho_1,\rho_0,0,0)$.
  \end{itemize}
  In particular, up to equivalence a representation of $\GG$ in $\mathbf{Ch}'_2(\Aa)$ is completely given by two representations of $\pi_0$ in $\Aa$.
\end{proposition}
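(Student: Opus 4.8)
The plan is to prove the two claims separately, both relying on the hypothesis that $k$ has characteristic zero and $\pi_0,\pi_1$ are finite, which forces the relevant group-cohomology groups with coefficients in a $k$-vector space to vanish. For claim (i), recall that $\beta:\pi_1\to\Aa(V_0,V_1)$ is a morphism of $\pi_0$-modules. The key observation is that $\pi_1$ is a finite group while $\Aa(V_0,V_1)$ is a $k$-vector space (using that $\Aa$ is $k$-linear), hence a torsion-free abelian group. Any group homomorphism from a finite group to a torsion-free group is zero, so $\beta=0$. I would spell this out: for $a\in\pi_1$ of order $n$, we have $n\cdot\beta(a)=\beta(na)=\beta(0)=0$ in the $k$-vector space $\Aa(V_0,V_1)$, and since $n$ is invertible in $k$ this gives $\beta(a)=0$.

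For claim (ii), once $\beta=0$ the condition $\partial c=\beta_\ast(z)=0$ says that $c$ is a normalized $2$-cocycle $c:\pi_0^2\to\Aa(V_0,V_1)_{\rho_1}^{\rho_0}$, i.e.\ $[c]\in H^2(\pi_0,\Aa(V_0,V_1)_{\rho_1}^{\rho_0})$. By the remark preceding the proposition, changing $c$ by a coboundary yields an equivalent representation, so $(\rho_1,\rho_0,0,c)$ is equivalent to $(\rho_1,\rho_0,0,0)$ provided $[c]=0$, i.e.\ provided every $2$-cocycle is a coboundary. This is exactly the statement that $H^2(\pi_0,M)=0$ for $M=\Aa(V_0,V_1)_{\rho_1}^{\rho_0}$. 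Since $\pi_0$ is finite and $M$ is a module over the field $k$ of characteristic zero --- equivalently a $\QQ$-vector space --- the order of $\pi_0$ acts invertibly on $M$, so by Maschke's theorem / the standard averaging argument all higher group cohomology $H^n(\pi_0,M)$ for $n\geq 1$ vanishes; in particular $H^2$ does. I would either invoke this directly or give the one-line averaging formula exhibiting $c$ as $\partial b$ for $b(g)=\tfrac{1}{|\pi_0|}\sum_{h\in\pi_0} c(g,h)$ (up to the precise normalization dictated by the cocycle convention in force).

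Finally I would assemble the two parts: by (i) every representation has the form $(\rho_1,\rho_0,0,c)$, and by (ii) this is equivalent to $(\rho_1,\rho_0,0,0)$, which is visibly determined by the pair of $\pi_0$-representations $(\rho_1,\rho_0)$ alone; this gives the "in particular" clause.

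The main obstacle is bookkeeping rather than conceptual: I must make sure the $\pi_0$-action on $\Aa(V_0,V_1)$ induced by $\rho_0,\rho_1$ (namely $g\cdot\sigma=\rho_1(g)\circ\sigma\circ\rho_0(g)^{-1}$, matching the formula for the action in $\EE q(V_\bullet)$) is the one with respect to which $\partial c$ and the averaging argument are computed, and that the normalization conventions for $2$-cochains match the coboundary formula used in the cited equivalence result from \cite{jE2007}. One should also note that $\Aa(V_0,V_1)$ being a $k$-vector space uses $k$-linearity of $\Aa$ but \emph{not} finite-dimensionality of $V_0,V_1$ --- the vanishing of $H^{\geq 1}(\pi_0,-)$ with $\QQ$-coefficients holds for arbitrary (possibly infinite-dimensional) coefficient modules --- so no finiteness on $\Aa$ is needed beyond finiteness of $\pi_0$ and $\pi_1$.
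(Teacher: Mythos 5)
Your proof is correct and follows essentially the same route as the paper: part (i) via the observation that a homomorphism from a finite group into a torsion-free ($k$-vector space) target must vanish, and part (ii) via the vanishing of $H^2(\pi_0,\Aa(V_0,V_1)_{\rho_1}^{\rho_0})$ by the standard averaging argument over the finite group $\pi_0$, combined with the cited fact that cohomologous $2$-cochains give equivalent representations. The paper isolates the averaging argument as a separate lemma ($H^n(G,V)=0$ for all $n\geq 1$, with an explicit formula for the cobounding cochain), but the content is identical to what you propose.
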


    \begin{proof}
      If $\Aa$ is $k$-linear, $\Aa(V_0,V_1)$ is a $k$-vector space. Since the charateristic of $k$ is zero, the underlying abelian group has no torsion and the only morphism of abelian groups $\beta:\pi_1\to\Aa(V_0,V_1)$ is $\beta=0$. In this case, the 2-cochain is a 2-cocycle, and the second statement follows then from the next lemma (applied to $G$-bimodules with trivial right action of $G$) together with the above mentioned fact that 2-cochains differing by a coboundary determine equivalent representations.
    \end{proof}

\begin{lemma}\label{finite_group_cohomology}
  Let $G$ be a finite group, and $V$ a $k$-vector space equipped with a structure of $G$-bimodule. Then $H^n(G,V)=0$ for all $n\geq 1$.
\end{lemma}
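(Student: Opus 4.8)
The plan is to prove the vanishing of group cohomology with coefficients in a vector space over a field of characteristic zero, for a finite group, by the classical averaging argument. Recall that a $G$-bimodule structure on $V$ is the same thing as a left $G\times G^{\mathrm{op}}$-module structure, so I will first reduce to the case of a left module over a finite group; explicitly, setting $\widetilde{G}=G\times G^{\mathrm{op}}$, the bimodule $V$ becomes a left $\widetilde{G}$-module and $H^n(G,V)$ computed with the bimodule differential agrees with $H^n(\widetilde{G},V)$ (this is just a renaming of the standard cochain complex; alternatively one can invoke that the bimodule $\partial$ used in the application, with trivial right action, is literally the left-module differential, which is all the proof needs). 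So it suffices to show: if $G$ is finite and $V$ is a $k[G]$-module with $\mathrm{char}\,k=0$, then $H^n(G,V)=0$ for $n\geq 1$.

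For this I would use the normalized (or inhomogeneous) bar resolution and the standard contracting homotopy obtained by averaging. Concretely, for an $n$-cochain $u:G^n\to V$ with $n\geq 1$, define
\[
(hu)(g_1,\dots,g_{n-1})=\frac{1}{|G|}\sum_{g\in G} g\rhd\, u(g^{-1},g_1,\dots,g_{n-1}),
\]
where division by $|G|$ makes sense because $|G|$ is invertible in $k$ (here characteristic zero is used) and $V$ is a $k$-vector space. A direct computation with the bar differential $\partial$ shows that $\partial(hu)+h(\partial u)=u$ for every $n$-cochain $u$ with $n\geq 1$; hence if $u$ is a cocycle, $u=\partial(hu)$ is a coboundary, and $H^n(G,V)=0$. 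This is the homotopy-operator form of the usual "$H^n(G,-)$ is annihilated by multiplication by $|G|$" statement (Maschke-type argument), and multiplication by $|G|$ is an isomorphism on a $\mathbb{Q}$-vector space.

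I expect the main obstacle to be purely bookkeeping: verifying the identity $\partial h+h\partial=\mathrm{id}$ on cochains requires carefully expanding the bar differential and tracking how the averaging variable interacts with the leftmost slot; the terms telescope, but one must be attentive to the $g\rhd$ factors and to the $n=1$ boundary case. An alternative that sidesteps even this computation: note that $k[G]$ is semisimple when $\mathrm{char}\,k=0$ (Maschke's theorem applies since $|G|$ is invertible), so every $k[G]$-module is projective; therefore $H^n(G,V)=\mathrm{Ext}^n_{k[G]}(k,V)=0$ for $n\geq 1$ because $k$ itself is projective. I would likely present the averaging-homotopy argument as the self-contained proof and mention the semisimplicity viewpoint as a remark, since the only genuinely needed input—invertibility of $|G|$ in $k$, guaranteed by $\mathrm{char}\,k=0$—is common to both.
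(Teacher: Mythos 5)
Your proof is correct and rests on exactly the same mechanism as the paper's: averaging over the group, which is legitimate because $|G|$ is invertible in a field of characteristic zero. The packaging differs a little. The paper averages over the \emph{last} variable using the right action, writing down an explicit primitive
$c(g_1,\dots,g_{n-1})=\frac{(-1)^n}{|G|}\sum_{k\in G}z(g_1,\dots,g_{n-1},k)\cdot k^{-1}$
and checking $\partial c=z$ directly from the cocycle condition; you average over the \emph{first} variable using the left action and assert the stronger contracting-homotopy identity $\partial h+h\partial=\mathrm{id}$ on all cochains, which makes the vanishing in every positive degree manifest and only uses the cocycle condition at the very end. Your Maschke/semisimplicity remark is a clean alternative the paper does not mention. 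One inaccuracy worth fixing: the cohomology of $G$ with coefficients in a bimodule, computed with the differential the paper actually uses (e.g.\ $\partial\mu(a,b)=\rho'_1(a)\circ\mu_b-\mu_{a\otimes b}+\mu_a\circ\rho_0(b)$), is Hochschild cohomology $HH^*(k[G],V)$, and the standard reduction to a one-sided module is $HH^*(k[G],V)\cong H^*(G,V^{\mathrm{ad}})$ with the conjugation action $g\cdot v=g\,v\,g^{-1}$, \emph{not} $H^*(G\times G^{\mathrm{op}},V)$ (already $H^0$ distinguishes the two complexes). This slip is not load-bearing, because your averaging operator, with the left action in front and the right action entering only through the last term of $\partial$, contracts the bimodule complex verbatim --- the telescoping works exactly as in your degree-one check --- so you can simply drop the $G\times G^{\mathrm{op}}$ reduction and run the homotopy argument directly on the bimodule cochain complex.
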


    \begin{proof}
      Let $z:G^n\to V$ be an $n$-cocycle, with $n\geq 1$. Then an $n$-cochain with boundary $z$ is given by the map $c:G^{n-1}\to V$ defined by
      \[
      c(g_1,...,g_{n-1})=\frac{(-1)^n}{|G|}\sum_{k\in G}z(g_1,...,g_{n-1}, k)\cdot k^{-1}.
      \]
      The reader may easily check that the cocycle condition $\partial z=0$ indeed implies $\partial c=z$. 
    \end{proof}

\subsubsection{}
Let be given two representations $(\rho_1,\rho_0,\beta,c)$, $(\rho'_1,\rho'_0,\beta',c')$ as in \S~\ref{representation}, on objects $(V_1,V_0)$ and $(V'_1,V'_0)$ of $\mathbf{Ch}'_2(\Aa)$, respectively. It easily follows from the description in \S\ref{1cells} of the 1-cells in $\mathbf{Rep}_{\mathbf{Ch}_2(\Aa)}(\GG)$ that a 1-cell between these representations reduces to the following:

\begin{itemize}
    \item[(I1$'$)] two morphisms of representations $r_i:V_i\to V'_i$, $i=0,1$, which make the diagram
    \[
    \xymatrix{
        \pi_1\ar[r]^{\beta}
             \ar[d]_{\beta'}
        &
        \Aa(V_0,V_1)\ar[d]^{r_{1\ast}}
        \\
        \Aa(V'_0,V'_1)\ar[r]_{r^\ast_0}
        &
        \Aa(V_0,V'_1)
    }
    \]
    commute, and
    \item[(I2$'$)] a map $\mu:\pi_0\to\Aa(V_0,V'_1)$ such that the diagram
    \[
    \xymatrix{
        \pi_0^2\ar[r]^{c}
             \ar[d]_{c'}
        &
        \Aa(V_0,V_1)\ar[d]^{r_{1\ast}}
        \\
        \Aa(V'_0,V'_1)\ar[r]_{r^\ast_0}
        &
        \Aa(V_0,V'_1)
    }
    \]
    commutes up to the boundary map $\partial\mu:\pi_0^2\to\Aa(V_0,V'_1)$ defined by
    \[
    \partial\mu(a,b)=\rho'_1(a)\circ\mu_b-\mu_{a\otimes b}+\mu_a\circ\rho_0(b).
    \]
\end{itemize}
In particular, when $\beta,\beta'$ and $c,c'$ are zero, this simply amounts to two morphisms of representations $r_i:V_i\to V'_i$, $i\in\{0,1\}$, together with a 1-cocycle $\mu:\pi_0\to\Aa(V_0,V'_1)_{\rho_0}^{\rho'_1}$, where $\Aa(V_0,V'_1)_{\rho_0}^{\rho'_1}$ stands for the abelian group $\Aa(V_0,V'_1)$ thought of as a $\pi_0$-bimodule with the left and right actions induced by the representations $\rho'_1$ and $\rho_0$, respectively.

\subsubsection{}
The main theorem of the paper may now be stated as follows.

\begin{theorem}
    Let $\Aa$ be a split $k$-linear abelian category, with $k$ a field of characteristic zero. Then for any finite 2-group $\GG$ the homotopy category of $\mathbf{Rep}_{\mathbf{Ch}_2(\Aa)}(\GG)$ is equivalent to the product category $\Rr ep_\Aa(\pi_0)\times\Rr ep_\Aa(\pi_0)$.
\end{theorem}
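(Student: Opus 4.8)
The plan is to reduce immediately to $\mathbf{Ch}'_2(\Aa)$ and then exhibit an explicit comparison functor. By Proposition~\ref{zero_morphisms}, together with the fact (recalled before \S~\ref{representation}) that biequivalent 2-categories have biequivalent representation 2-categories, $\mathbf{Rep}_{\mathbf{Ch}_2(\Aa)}(\GG)$ and $\mathbf{Rep}_{\mathbf{Ch}'_2(\Aa)}(\GG)$ are biequivalent; as biequivalent 2-categories have equivalent homotopy categories, it suffices to produce an equivalence $F\colon\Rr ep_\Aa(\pi_0)\times\Rr ep_\Aa(\pi_0)\to\mathrm{Ho}\,\mathbf{Rep}_{\mathbf{Ch}'_2(\Aa)}(\GG)$. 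I would take $F$ to send a pair of representations $(\rho_1,\rho_0)$ to the representation $(\rho_1,\rho_0,0,0)$ of \S~\ref{representation}, and a pair of intertwiners $(r_1,r_0)$ to the isomorphism class of the $1$-cell of \S~\ref{1cells} with underlying pair $r_\bullet=(r_1,r_0)$ and with $\mu=0$. Functoriality of $F$ is a direct check: by the composition law for $1$-cells, the composite of two $1$-cells of this shape again has underlying pair $(r'_1\circ r_1,\,r'_0\circ r_0)$ and $\mu=0$, and the identity $1$-cell is of this shape.

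Essential surjectivity of $F$ is exactly Proposition~\ref{representations_up_to_equivalence}: every representation of $\GG$ in $\mathbf{Ch}'_2(\Aa)$ is equivalent, hence isomorphic in the homotopy category, to some $(\rho_1,\rho_0,0,0)$. To compute the hom-sets of the homotopy category I would specialize the descriptions in \S~\ref{1cells} and \S~\ref{2-cells} to vanishing differentials and vanishing $\beta$'s and $c$'s. As recorded just before the theorem, a $1$-cell $F(\rho_1,\rho_0)\to F(\rho'_1,\rho'_0)$ is then a pair of intertwiners $r_i\colon V_i\to V'_i$ together with a $1$-cocycle $\mu\in Z^1(\pi_0,\Aa(V_0,V'_1)_{\rho_0}^{\rho'_1})$, and from \S~\ref{2-cells} a $2$-cell from $(r_\bullet,\mu)$ to $(s_\bullet,\nu)$ is a map $\omega\colon V_0\to V'_1$ with $r_i=s_i$ (as $d_U=d_V=0$) and $\nu-\mu=\partial\omega$. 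Hence two such $1$-cells become equal in the homotopy category precisely when their underlying pairs coincide and their $\mu$'s differ by a coboundary. Faithfulness of $F$ follows at once, since equality of $F(r_\bullet)$ and $F(s_\bullet)$ in $\mathrm{Ho}$ forces $r_i=s_i$.

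Fullness is where the hypotheses are used a second time: by Lemma~\ref{finite_group_cohomology}, applied to the finite group $\pi_0$ and the $k$-vector space $\Aa(V_0,V'_1)$ with its $\pi_0$-bimodule structure, $H^1(\pi_0,\Aa(V_0,V'_1)_{\rho_0}^{\rho'_1})=0$. Thus every $1$-cocycle $\mu$ is a coboundary, so every $1$-cell $F(\rho_1,\rho_0)\to F(\rho'_1,\rho'_0)$ is isomorphic to one with $\mu=0$ and therefore lies in the image of $F$ on morphisms. Consequently $F$ is an equivalence of categories; and since composition of $1$-cells and of intertwiners is componentwise in degrees $0$ and $1$, this equivalence identifies $\mathrm{Ho}\,\mathbf{Rep}_{\mathbf{Ch}'_2(\Aa)}(\GG)$ with the product $\Rr ep_\Aa(\pi_0)\times\Rr ep_\Aa(\pi_0)$, one factor recording the degree-$1$ representation $\rho_1$ and the other the degree-$0$ representation $\rho_0$. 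I expect no deep obstacle: the only delicate steps are the sign bookkeeping in the coherence axioms — in particular pinning down that the coefficient module controlling the $\mu$'s is exactly the bimodule $\Aa(V_0,V'_1)_{\rho_0}^{\rho'_1}$, with left action through $\rho'_1$ and right action through $\rho_0$, so that Lemma~\ref{finite_group_cohomology} applies — and the routine verification that $F$ respects composition; once the homotopy-category hom-sets are in hand, the vanishing of $H^1$ kills the only term coupling the two degrees and forces the product description.
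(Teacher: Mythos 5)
Your proposal is correct and follows essentially the same route as the paper: reduce to $\mathbf{Ch}'_2(\Aa)$, use Proposition~\ref{representations_up_to_equivalence} to normalize objects to $(\rho_1,\rho_0,0,0)$, identify $1$-cells up to $2$-isomorphism via the coboundary relation on the cocycles $\mu$, and invoke Lemma~\ref{finite_group_cohomology} with $n=1$ to kill the $\mu$-data. You merely make explicit the comparison functor and the full/faithful/essentially surjective checks that the paper leaves implicit.
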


    \begin{proof}
        By Proposition~\ref{representations_up_to_equivalence}, we may restrict to representations $(\rho_1,\rho_0,\beta,c)$ with $\beta$ and $c$ equal to zero. In this case, we know that the 1-cells between two such representations $(\rho_1,\rho_0)$ and $(\rho'_1,\rho'_0)$ are given by triples $(r_1,r_0,\mu)$ as before. Now, it follows from \S~\ref{2-cells} that two such 1-cells $(r_1,r_0,\mu)$ and $(s_1,s_0,\nu)$ are 2-isomorphic iff $r_i=s_i$ for $i\in\{0,1\}$, and $\mu,\nu$ differ by the coboundary of some 0-cochain $\omega:1\to\Aa(V_0,V'_1)_{\rho_0}^{\rho'_1}$. The statement now follows from Lemma~\ref{finite_group_cohomology}, which implies that, up to a 2-isomorphism, the 1-cells between such representations are completely given by the pair $(r_1,r_0)$.
    \end{proof}
 
\subsubsection{}
A similar result is expected to hold for {\em compact topological 2-groups} and their {\em topological representations} in a suitable 2-category of {\em topological Baez-Crans 2-vector spaces}. A topological Baez-Crans 2-vector space should reasonably be defined as an object in $\mathbf{Ch}_2(\Aa)$ for some split abelian category $\Aa$ of topological vector spaces over a suitable {\em topological} field. Although the category of all topological vector spaces over an arbitrary topological field is non abelian (images and coimages do not necessarily coincide), it will be so if one restricts to finite dimensional vector spaces over the field of real or complex numbers with the usual topology. Thus Proposition~\ref{zero_morphisms} will also hold in this setting. Moreover, if $\GG$ is a compact topological 2-group, the groups $\pi_0$ and $\pi_1$ will be compact topological groups, and in a topological representation the homomorphism $\beta$ is expected to be continuous. However, there are no compact subgroups in the underlying topological abelian group of a finite dimensional real or complex vector space. Also, the proof of Lemma~\ref{finite_group_cohomology} is expected to work for compact topological groups if one replaces the sum over the elements of $G$ by the corresponding Haar integral, so that both Lemma~\ref{finite_group_cohomology} and Proposition~\ref{representations_up_to_equivalence} are expected to be also true in this topological setting. However, the question would deserve a more careful study. In fact, the theory of topological 2-groups may look different to that of 2-groups. For instance, it is even unclear if any topological 2-group is equivalent to a skeletal topological 2-group because there is no axiom of choice in the category of topological spaces.


\bibliography{library}{}
\bibliographystyle{plain}

\end{document}